\documentclass[sn-mathphys-num]{sn-jnl}


\usepackage{graphicx}%
\usepackage{multirow}%
\usepackage{amsmath,amssymb,amsfonts}%
\usepackage{amsthm}%
\usepackage{mathrsfs}%
\usepackage[title]{appendix}%
\usepackage{xcolor}%
\usepackage{textcomp}%
\usepackage{manyfoot}%
\usepackage{booktabs}%
\usepackage{algorithm}%
\usepackage{algorithmicx}%
\usepackage{algpseudocode}%
\usepackage{listings}%

\usepackage{enumitem}
\usepackage{cleveref}

\usepackage{mathtools}

\newcommand{\R}{\mathbb{R}}
\newcommand{\N}{\mathbb{N}}

\renewcommand{\d}{\mathsf{d}}



\theoremstyle{thmstyleone}%
\newtheorem{theorem}{Theorem}[section]
%

\newtheorem{lemma}{Lemma}[section]
\newtheorem{corollary}{Corollary}[section]

\theoremstyle{thmstyletwo}%

\theoremstyle{thmstylethree}%
\newtheorem{definition}{Definition}[section]%

\numberwithin{equation}{section}

\raggedbottom

\begin{document}

\title[Article Title]{Convergence rates of Newton's method for strongly self-concordant minimization}


\author[1]{\fnm{Nick} \sur{Tsipinakis}}\email{nikolaos.tsipinakis@unidistance.ch}

\author*[2]{\fnm{Panos} \sur{Parpas}}\email{panos.parpas@imperial.ac.uk}

\affil[1]{\orgdiv{Department of Mathematics and Computer Science}, \orgname{UniDistance Suisse}, \orgaddress{ \city{Brig}, \country{Switzerland}}}

\affil*[2]{\orgdiv{Department of Computing}, \orgname{Imperial College}, \orgaddress{\city{London}, \country{UK}}}


\abstract{Newton's method has been thoroughly studied for the class of self-concordant functions. However, a local analysis specific to strongly self-concordant functions---a subclass of the former---is missing from the literature. The local quadratic rate of strongly self-concordant functions follows, of course, from the known results for self-concordant functions. 
However, it is not known whether strongly self-concordant functions enjoy better theoretical properties. In this paper, we study the local convergence of Newton's method for this subclass. We show that its quadratic convergence rate differs from that of general self-concordant functions. In particular, it is provably faster for a wide range of objective functions and benefits from a larger region of local convergence. Thus, the results of this paper close the gap in the theoretical understanding of Newton’s method applied to strongly self-concordant functions.
}

\keywords{Newton method, Strongly self-concordant functions, Local quadratic rates}



\maketitle

\section{Introduction}
Newton's method is a cornerstone in numerical optimization, renowned for its powerful theory which allows the method to achieve a local quadratic convergence rate \cite{nesterov2018lectures, boyd2004convex}. Its rigorous theoretical basis was particularly pronounced within the framework of self-concordant functions---a class primarily introduced to facilitate the analysis of interior-point methods by Nesterov and Nemirovski in the early 1990s \cite{nesterov1994interior}. A function is called self-concordant if its third derivative is bounded above by the local norm induced by the Hessian matrix. That is,
\begin{equation} \label{def: self concordant func}
|D^3 f(x)[u,u,u]| \leq 2 M_\text{sc} \left[u^T \nabla^2 f(x) u \right]^{3/2},    
\end{equation}
where $M_\text{sc} \geq 0$. This property ensures that the function's curvature does not change too rapidly when $x$ changes, and can replace effectively the most commonly used Lipschitz continuity of the Hessians.

In nonlinear optimization, the benefits of studying Newton's method in this setting had a big impact both in theory and in practice. Under this assumption, it is possible to provide analyses that are invariant to the affine transformation of variables, since assumptions such as the Lipschitz continuity of the Hessians and the strong convexity of $f$, which are usually accompanied by unknown constants, are no longer required. The practical impact of self-concordance is the derivation of meaningful damping parameters when analyzing Newton's global behavior, as they do not depend on such unknowns. 

The benefits of self-concordance are also evident in the analysis of various Newton-type methods. In \cite{tran2015composite}, the proximal Newton method was studied, and both its local quadratic convergence and scale-invariance properties were established, along with global convergence guarantees. The inexact variant of the proximal Newton method was further analyzed in \cite{li2017inexact}, where a scale-invariant linear-to-quadratic convergence rate was derived; under certain assumptions, this rate can become superlinear. Self-concordance has also been incorporated into the analysis of randomized Newton methods. In \cite{pilanci2017newton}, the authors proved linear-quadratic and superlinear convergence rates for the sub-sampled Newton method, with a theoretical framework that does not rely on unknown problem-dependent constants. Furthermore, scale-invariant convergence analyses were extended to multilevel and subspace Newton methods in \cite{tsipinakis2023lowrank, tsipinakis2024multilevel}, where it was shown that superlinear rates can be achieved under specific problem structures. Regarding quasi-Newton methods, self-concordance was incorporated to show global convergence of BFGS without line-search \cite{gao2019quasi}.

The theory of self-concordant functions has been extended to encompass a broader class of objective functions. Quasi self-concordant functions were introduced to analyze the logistic regression loss, which does not satisfy the classical self-concordance property in \eqref{def: self concordant func}. Under this relaxed framework, Newton’s method with gradient regularization was studied and shown to achieve a global linear convergence rate \cite{doikov2023minimizing}. In parallel, the concept of self-concordant-like functions was proposed in \cite{tran2015self-concordant-like} to address composite convex optimization problems. 
It was demonstrated that the proximal Newton method enjoys both global convergence and local quadratic convergence in this setting. A more general framework, termed generalized self-concordant functions, was introduced in \cite{sun2019generalized}. 
Within this framework, Newton's method was proven to exhibit local quadratic convergence near the solution $x^*$, and explicit damping parameters were derived to ensure global convergence from any initial point $x_0 \in \mathbb{R}^n$. Additionally, a class of strongly self-concordant functions was introduced for the analysis of quasi-Newton methods, such as the convex Broyden class. It is a subclass of the classical self-concordance and hence a stronger assumption. Specifically, compared to the general class, strongly self-concordance imposes a stronger assumption on the curvature of the objective function. However, within this framework, the first explicit superlinear convergence rates for the Broyden class were established in \cite{rodomanov2021greedy}, offering deeper insights into the nature of superlinear convergence in quasi-Newton methods.

In light of these developments, the class of self-concordant functions and its extensions, although occasionally strong in their assumptions, appear to be the most appropriate analytical framework for studying the convergence behavior of Newton-type methods. Despite these advances, the convergence behavior of the classical Newton method when applied specifically to strongly self-concordant functions remains underexplored. The local convergence properties of Newton's method follows from existing results. However, whether or not there are any theoretical or practical implications of imposing the extra assumptions has been hitherto unexplored. This paper addresses this gap by conducting a local convergence analysis of Newton's method for the strongly self-concordant setting. We show that the rate of local quadratic convergence in this subclass differs from that in the general case. In particular, we show that Newton’s method converges provably faster for functions that satisfy the strongly self-concordant assumption and that it benefits from a larger region of local quadratic convergence. Thus, our results close a gap in the theoretical understanding of Newton’s method when applied to strongly self-concordant functions and formally confirm the intuition that stronger curvature assumptions lead to improved convergence behavior.

\section{Preliminaries}
For a function $f : \R^n \rightarrow \R$, the gradient and Hessian at $x \in \R^n$ are denoted by $\nabla f(x)$ and $\nabla^2 f(x)$, respectively.
We say that a matrix is $A \in \R^{n \times n}$ is positive semidefinite iff $x^T A x \geq 0 $, for all $x \in \R^n$, and we write $A \succeq 0$. Thus, for $A, B \in \R^{n \times n}$ we write $A \succeq B$ iff $x^T \left(A - B\right) x \geq 0 $, for all $x \in \R^n$. We also define the function $\omega \ : \ \R \to \R$ where
\begin{equation} \label{eq: omega definition}
    \omega(t) \coloneq t - \ln({1+t})
\end{equation}
Let $f$, be a twice differentiable strictly convex functions. We are interested in solving the unconstrained optimization problem
\begin{equation*}
    x^* = \underset{x \in \R^n}{\operatorname{arg \  min}}  \ f(x).
\end{equation*}
Let us fix an $x \in \R^n$. The local norms induced by the Hessian at $x$ are defined as follows
\begin{equation} \label{eq: local norms definition}
    \| y \|_x = \left( y^T \nabla^2f(x) y \right)^{\frac{1}{2}} \quad \text{and}  \quad  \| y \|_x^* = \left( y^T \nabla^2f(x)^{-1} y \right)^{\frac{1}{2}},
\end{equation}
for any $y \in \R^n$. Note that these norms are well-defined due to the strict convexity of $f$.
We also assume that $f$ is bounded from below such that a solution exists, $x^* \in \R^n$. Newton's method convergence for the class of self-concordant functions is measured based on the local norm of the gradient \eqref{eq: local norms definition}, 
\begin{align}
    \lambda(x) & \coloneq \left[\nabla f(x)^T \nabla^2 f(x)^{-1} \nabla f(x)\right]^{1/2} \label{eq: newton decrement}
\end{align}
One can show that if $f$ satisfies \eqref{def: self concordant func}, then the Newton method converges globally, i.e., from any initialization point $x_0 \in \R^n$. 
\begin{theorem} \cite{nesterov2018lectures}[Theorem 5.1.15] \label{thm: newton first phase self concor}
Suppose the sequence $(x_k)_{k \in \N}$ is generated by the damped Newton method $ x_{k+1} = x_k - \frac{1}{1 + M_\text{sc} \lambda(x_k)} \nabla^2 f (x_k)^{-1} \nabla f (x_k)$. Then, for any $k \in \N$, we have
\begin{equation} \label{ineq: newton first phase self concor}
    f(x_{k+1}) \leq f(x_{k}) - \frac{1}{M_\text{sc}^2} \omega(M_\text{sc} \lambda(x_k))
\end{equation}
where $\omega$ is defined in \eqref{eq: omega definition}.
\end{theorem}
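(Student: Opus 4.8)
The plan is to reduce the multidimensional iteration to a one-dimensional estimate along the Newton direction and to exploit the auxiliary upper bound that self-concordance produces for the function value. Write $d_k \coloneq -\nabla^2 f(x_k)^{-1}\nabla f(x_k)$ for the Newton direction, so that the damped step reads $x_{k+1} = x_k + t_k d_k$ with $t_k = (1 + M_\text{sc}\,\lambda(x_k))^{-1}$. Two identities drive the whole computation: from \eqref{eq: local norms definition} and \eqref{eq: newton decrement} we have $\|d_k\|_{x_k} = \lambda(x_k)$ and $\nabla f(x_k)^T d_k = -\lambda(x_k)^2$.

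First I would establish the key intermediate estimate that, for any $f$ satisfying \eqref{def: self concordant func} and any $y$ with $r \coloneq \|y - x\|_x$ satisfying $M_\text{sc} r < 1$,
\[
f(y) \le f(x) + \nabla f(x)^T (y - x) + \frac{1}{M_\text{sc}^2}\,\omega_*(M_\text{sc} r), \qquad \omega_*(s) \coloneq -s - \ln(1-s).
\]
This follows from \eqref{def: self concordant func} by integration along the segment $[x,y]$: the third-derivative bound constrains the growth of the Hessian in the local norm, which after one integration bounds $\phi''(t)$ for the restriction $\phi(t) = f(x + t(y-x))$, and a Taylor expansion with integral remainder then returns the function-value bound, with $\omega_*$ arising as the relevant primitive. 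I expect this integration step to be the technical heart of the argument.

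Next I would specialize this estimate to $y = x_k + t d_k$ with $t \ge 0$. Using the two identities above, $r = t\,\lambda(x_k)$ and $\nabla f(x_k)^T(t d_k) = -t\,\lambda(x_k)^2$, so the bound becomes $g(t) = f(x_k) - t\,\lambda(x_k)^2 + M_\text{sc}^{-2}\,\omega_*(M_\text{sc} t\,\lambda(x_k))$, valid for $M_\text{sc} t\,\lambda(x_k) < 1$. Differentiating and using $\omega_*'(s) = s/(1-s)$, the stationarity condition $g'(t) = 0$ simplifies to $t(1 + M_\text{sc}\,\lambda(x_k)) = 1$, so the minimizer of the upper bound is exactly $t_k = (1 + M_\text{sc}\,\lambda(x_k))^{-1}$. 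This simultaneously justifies the choice of damping parameter and ensures that we evaluate the bound at its most favorable point.

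Finally I would substitute $t = t_k$ and simplify. Setting $u = M_\text{sc}\,\lambda(x_k)$, one has $M_\text{sc} t_k\,\lambda(x_k) = u/(1+u)$, whence $1 - M_\text{sc} t_k\,\lambda(x_k) = (1+u)^{-1}$ and $\omega_*(u/(1+u)) = -u/(1+u) + \ln(1+u)$. Combining this with $-t_k\,\lambda(x_k)^2 = -u^2 / (M_\text{sc}^2(1+u))$ gives
\[
g(t_k) - f(x_k) = \frac{1}{M_\text{sc}^2}\Big(-\frac{u^2 + u}{1+u} + \ln(1+u)\Big) = -\frac{1}{M_\text{sc}^2}\big(u - \ln(1+u)\big) = -\frac{1}{M_\text{sc}^2}\,\omega(u),
\]
with $\omega$ as in \eqref{eq: omega definition}, which is precisely the claimed decrease $-M_\text{sc}^{-2}\,\omega(M_\text{sc}\,\lambda(x_k))$. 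The only domain condition to check, $M_\text{sc} t_k\,\lambda(x_k) < 1$, holds automatically since $u/(1+u) < 1$ for every $u \ge 0$, consistent with the global (any-$x_0$) nature of the statement.
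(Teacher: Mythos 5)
Your proposal is correct and follows essentially the same route as the proof the paper relies on (the cited Theorem 5.1.15 in Nesterov's book): apply the self-concordance upper bound $f(y) \le f(x) + \nabla f(x)^T(y-x) + M_\text{sc}^{-2}\,\omega_*(M_\text{sc}\|y-x\|_x)$ along the Newton direction and evaluate it at the damped step, where the terms collapse to $-M_\text{sc}^{-2}\,\omega(M_\text{sc}\lambda(x_k))$. Your extra observation that $t_k = (1+M_\text{sc}\lambda(x_k))^{-1}$ is exactly the minimizer of this upper bound is a nice (and standard) motivation for the damping choice, but it is the same argument, not a different one.
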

Moreover, Newton's method converges quadratically in a neighborhood of $x^*$. The local quadratic rate can be estimated by the local norm of the gradient.
\begin{theorem} \cite{nesterov2018lectures}[Theorem 5.2.2] \label{thm: newton quadratic phase self concor}
Suppose the sequence $(x_k)_{k \in \N}$ is generated by the pure Newton method $ x_{k+1} = x_k - \nabla^2 f (x_k)^{-1} \nabla f (x_k)$. Then, if $\lambda(x_k) < \frac{1}{M_\text{sc}}$, we have
\begin{equation} \label{ineq: newton quadratic phase self concor with Msc}
\lambda(x_{k+1}) \leq \frac{M_\text{sc} \lambda(x_{k})^2}{\left( 1-M_\text{sc}\lambda(x_{k})\right)^2}.
\end{equation}
\end{theorem}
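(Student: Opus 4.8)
The plan is to track the Newton step through the decrement $\lambda$ and to control the variation of the Hessian along the segment joining $x_k$ to $x_{k+1}$. Write $x = x_k$, let $p = -\nabla^2 f(x)^{-1}\nabla f(x)$ be the Newton direction, and set $x_+ = x_{k+1} = x + p$. The first observation is that the step length measured in the local norm equals the decrement: $\|p\|_x^2 = p^T\nabla^2 f(x)p = \nabla f(x)^T \nabla^2 f(x)^{-1}\nabla f(x) = \lambda(x)^2$, so $\|p\|_x = \lambda(x)$. Using the defining equation $\nabla f(x) + \nabla^2 f(x)p = 0$ together with the fundamental theorem of calculus, I would write the new gradient as
\begin{equation*}
\nabla f(x_+) = \nabla f(x) + \int_0^1 \nabla^2 f(x + tp)\,p\,dt = \left(\int_0^1 \left[\nabla^2 f(x+tp) - \nabla^2 f(x)\right]dt\right)p,
\end{equation*}
which isolates the discrepancy between $\nabla^2 f(x)$ and its values along the Newton segment as the sole source of the residual gradient.

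The crucial ingredient---and the step I expect to be the \emph{main obstacle}---is a two-sided bound on this Hessian variation derived purely from the self-concordance inequality \eqref{def: self concordant func}. I would first upgrade \eqref{def: self concordant func} to its polarized form $|D^3 f(y)[h,u,u]| \leq 2M_\text{sc}\|h\|_y\|u\|_y^2$, then set $\phi(t) = \|p\|_{x+tp}^2$ and differentiate to obtain $|\phi'(t)| = |D^3 f(x+tp)[p,p,p]| \leq 2M_\text{sc}\phi(t)^{3/2}$; integrating the resulting bound on $\tfrac{d}{dt}\phi(t)^{-1/2}$ controls how $\|p\|_{x+tp}$ grows. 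Feeding this into the analogous ODE for $\psi(t) = \|u\|_{x+tp}^2$ with arbitrary $u$ and integrating $\tfrac{d}{dt}\ln\psi(t)$ yields, for $r := tM_\text{sc}\lambda(x) < 1$,
\begin{equation*}
(1 - r)^2\,\nabla^2 f(x) \preceq \nabla^2 f(x+tp) \preceq \frac{1}{(1-r)^2}\,\nabla^2 f(x).
\end{equation*}
This is the technically delicate part, since it requires the integrations to be carried out carefully and the correct side of the bound to be invoked at each later stage.

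With this bound in hand the remainder is a direct estimation. Writing $H = \nabla^2 f(x)$ and conjugating, I would bound the operator norm of $\widetilde{G} = \int_0^1 \left[H^{-1/2}\nabla^2 f(x+tp)H^{-1/2} - I\right]dt$ using the displayed inequality at each $t$, so that $\|\widetilde{G}\|_2 \leq \int_0^1\left(\tfrac{1}{(1-tM_\text{sc}\lambda)^2} - 1\right)dt = \frac{M_\text{sc}\lambda}{1 - M_\text{sc}\lambda}$, where $\lambda = \lambda(x)$. Since $\nabla f(x_+) = H^{1/2}\widetilde{G}H^{1/2}p$, we get $\|\nabla f(x_+)\|_x^* = \|\widetilde{G}H^{1/2}p\|_2 \leq \|\widetilde{G}\|_2\,\|p\|_x \leq \frac{M_\text{sc}\lambda^2}{1-M_\text{sc}\lambda}$. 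Finally I would convert the dual norm at $x$ into the dual norm at $x_+$ using the lower bound $\nabla^2 f(x_+) \succeq (1-M_\text{sc}\lambda)^2 H$ (the case $t=1$ above), which gives $\|g\|_{x_+}^* \leq \frac{1}{1-M_\text{sc}\lambda}\|g\|_x^*$ for every $g$. Applying this to $g = \nabla f(x_+)$ and recalling $\lambda(x_+) = \|\nabla f(x_+)\|_{x_+}^*$ produces
\begin{equation*}
\lambda(x_{k+1}) \leq \frac{1}{1 - M_\text{sc}\lambda}\cdot\frac{M_\text{sc}\lambda^2}{1-M_\text{sc}\lambda} = \frac{M_\text{sc}\lambda(x_k)^2}{(1 - M_\text{sc}\lambda(x_k))^2},
\end{equation*}
which is exactly the claimed estimate; the hypothesis $\lambda(x_k) < 1/M_\text{sc}$ is precisely what keeps every factor $(1 - M_\text{sc}\lambda)$ positive.
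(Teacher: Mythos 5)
Your proof is correct, and it is essentially the standard argument behind the result the paper cites: the paper itself gives no proof of this theorem (it is quoted from Nesterov's \emph{Lectures on Convex Optimization}, Theorem 5.2.2), and your reconstruction---the ODE-based two-sided Hessian stability bound $(1-r)^2\nabla^2 f(x) \preceq \nabla^2 f(x+tp) \preceq (1-r)^{-2}\nabla^2 f(x)$, the integral representation $\nabla f(x_+) = \bigl(\int_0^1[\nabla^2 f(x+tp)-\nabla^2 f(x)]\,dt\bigr)p$, and the final dual-norm conversion---is precisely the textbook route, with all constants coming out right. It is worth noting that your decomposition is also structurally the same one the paper uses for its new result in the strongly self-concordant setting (Lemma 3.2 writes $\nabla f_{k+1} = (G_k - (1+\alpha_k)\nabla^2 f_k)d_k$ and then bounds the discrepancy matrix), the only difference being that there the Hessian-stability input is the $(1+Mr)$-type bound of Rodomanov--Nesterov rather than the $(1-r)^{-2}$-type bound you derive from plain self-concordance.
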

Theorem \ref{thm: newton quadratic phase self concor} shows that the quadratic convergence rate will be activated for some $\lambda(x_{k}) \leq \frac{3 - \sqrt{5}}{2M_\text{sc}}$, otherwise we apply the damped Newton phase which converges as in \eqref{ineq: newton first phase self concor}.

Moreover, we call a function strongly self-concordant according to the following definition.
\begin{definition} \label{def: strongly self concordance}
        A function is strongly self-concordant with constant $M>0$ if for all $x,y,z,w \in \R^n$ it holds
    \begin{equation}
        \nabla^2f(y) - \nabla^2f(x) \preceq M \|x-y\|_z \nabla^2f(w). \label{ineq: strong self conc}
    \end{equation}
\end{definition}

The class of strongly convex self-concordant functions was introduced in \cite{rodomanov2021greedy} to analyze greedy quasi-Newton methods. It is a subclass of the broader class of self-concordant functions. It holds that a strongly self-concordant function is self-concordant with parameter $M_\text{sc} = \frac{M}{2}$ in \eqref{def: self concordant func}.
It has been shown that a function that is strongly convex with constant $\mu$ and has $L$-Lipschitz continuous Hessian is strongly self-concordant with $M = \frac{L}{\mu^{3/2}}$. Nevertheless, unlike strong convexity, strongly self-concordance is an affine invariant property, and thus our analysis is expected to be invariant to affine transformation of variables. Moreover, particular examples of functions satisfying \cref{def: strongly self concordance} include the quadratic function and the log-sum-exp function. The latter has the following form
\begin{equation} \label{eq: example strong self concordance}
    f(x) = \ln \left( \sum_{i=1}^m e^{a_i^T x + b_i} \right) + \frac{1}{2} \sum_{i=1}^m a_i^T x + \frac{\ell_2}{2} \| x\|^2,
\end{equation}
where $a_i \in \R^n, \ell_2>0$ and it has been shown that it is strongly self-concordant with $M=2$ \cite{rodomanov2021greedy}. The class of strongly self-concordant function is of particular interest as it forces the Hessians to be close to each other when $x$ is close to $y$ w.r.t. the local norm, in the sense provided in the following lemma.
\begin{lemma}[\cite{rodomanov2021greedy}, Lemma 4.1] \label{lemma: rodomanov self-concordant}
    Let $f$ be a strongly self-concordant functions with constant $M$, $x,y \in \R^n$, $r \coloneq \| y-x\|_x$ and $G \coloneq \int_0^1 \nabla^2 f (x + t (y - x)) \d t$. Then,
    \begin{align}
        & \frac{1}{1+ Mr} \nabla^2f(x) \preceq \nabla^2f(y) \preceq \left( 1+ Mr \right) \nabla^2f(x) \label{ineq: hessian comparison x_k x_k+1} \\
        & \frac{1}{1+ \frac{Mr}{2}} \nabla^2f(x) \preceq G \preceq \left( 1+ \frac{Mr}{2} \right) \nabla^2f(x) \label{ineq: average hessian for x_k} \\
        & \frac{1}{1+ \frac{Mr}{2}} \nabla^2f(y) \preceq G \preceq \left( 1+ \frac{Mr}{2} \right) \nabla^2f(y) \label{ineq: average hessian for x_k+1}
    \end{align}    
\end{lemma}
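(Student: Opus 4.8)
The plan is to derive all three displays from Definition~\ref{def: strongly self concordance} by making judicious choices of the free points $z$ and $w$ in \eqref{ineq: strong self conc}, always measuring displacements in the local norm at the base point $x$ so that the single scalar $r = \|y-x\|_x$ governs every estimate. I would treat each relation as a Loewner-order inequality, verified by testing against an arbitrary $v \in \R^n$ and reducing to a scalar inequality; a family of matrix inequalities $A(t) \preceq B(t)$ valid for all $t$ may then be integrated in $t$, since $v^T A(t) v \le v^T B(t) v$ integrates termwise.

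First I would establish \eqref{ineq: hessian comparison x_k x_k+1}. For the upper bound, apply \eqref{ineq: strong self conc} to the pair $(y,x)$ with $z = w = x$, giving $\nabla^2 f(y) - \nabla^2 f(x) \preceq M\|y-x\|_x \nabla^2 f(x) = Mr\,\nabla^2 f(x)$, i.e. $\nabla^2 f(y) \preceq (1+Mr)\nabla^2 f(x)$. For the lower bound, interchange the first two arguments and take $z = x$, $w = y$, which yields $\nabla^2 f(x) - \nabla^2 f(y) \preceq Mr\,\nabla^2 f(y)$, hence $\nabla^2 f(x) \preceq (1+Mr)\nabla^2 f(y)$; dividing by $1+Mr$ gives $\tfrac{1}{1+Mr}\nabla^2 f(x) \preceq \nabla^2 f(y)$.

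Next, for \eqref{ineq: average hessian for x_k} I would set $x_t := x + t(y-x)$ and note $\|x_t - x\|_x = t\,\|y-x\|_x = tr$. Applying the two-sided estimate just proved with $x_t$ in place of $y$ (parameter $tr$) gives, for every $t \in [0,1]$,
\begin{equation*}
\frac{1}{1+Mtr}\nabla^2 f(x) \preceq \nabla^2 f(x_t) \preceq (1+Mtr)\nabla^2 f(x).
\end{equation*}
Integrating the upper relation and using $\int_0^1 (1+Mtr)\,dt = 1 + \tfrac{Mr}{2}$ yields $G \preceq (1+\tfrac{Mr}{2})\nabla^2 f(x)$. Integrating the lower relation gives $G \succeq \big(\int_0^1 (1+Mtr)^{-1}\,dt\big)\nabla^2 f(x)$, so the remaining task is $\int_0^1 (1+Mtr)^{-1}\,dt \ge (1+\tfrac{Mr}{2})^{-1}$. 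This is the one nonroutine point: rather than evaluate the integral as $\tfrac{\ln(1+Mr)}{Mr}$ and compare, I would invoke Jensen's inequality for the convex function $s \mapsto (1+s)^{-1}$, which gives $\int_0^1 (1+Mtr)^{-1}\,dt \ge \big(1 + \int_0^1 Mtr\,dt\big)^{-1} = (1+\tfrac{Mr}{2})^{-1}$ in one line and produces exactly the claimed constant. A crude linearization such as $(1+Mtr)^{-1}\ge 1 - Mtr$ would only deliver the weaker $1-\tfrac{Mr}{2}$, so convexity is essential here.

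Finally, \eqref{ineq: average hessian for x_k+1} follows by the same integration, now comparing $\nabla^2 f(x_t)$ with $\nabla^2 f(y)$. The decisive bookkeeping point is to measure the displacement from $y$ still in the norm at $x$: since $x_t - y = (1-t)(x-y)$, we have $\|x_t - y\|_x = (1-t)r$, so applying \eqref{ineq: strong self conc} to the pair $(x_t,y)$ with $z = x$ and $w \in \{y,\,x_t\}$ gives $\tfrac{1}{1+M(1-t)r}\nabla^2 f(y) \preceq \nabla^2 f(x_t) \preceq (1+M(1-t)r)\nabla^2 f(y)$. Integrating, with $\int_0^1 (1-t)\,dt = \tfrac12$ on the upper side and Jensen again on the lower side, reproduces the factor $1+\tfrac{Mr}{2}$ and closes the proof. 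The main obstacle throughout is the sharp lower bound: keeping $r$ tied to the $x$-norm in \eqref{ineq: average hessian for x_k+1} and extracting the exact constant $(1+\tfrac{Mr}{2})^{-1}$ from convexity rather than from a lossy pointwise estimate.
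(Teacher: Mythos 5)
Your proof is correct and matches the approach of the proof this paper relies on (the paper itself gives no proof, citing Lemma~4.1 of \cite{rodomanov2021greedy}): apply \eqref{ineq: strong self conc} pointwise with the choices $z=x$ and $w$ equal to whichever Hessian should appear on the right, then integrate along $x_t = x + t(y-x)$ using $\|x_t-x\|_x = tr$ and $\|x_t-y\|_x=(1-t)r$. The only cosmetic difference is at the lower bound, where the cited proof evaluates $\int_0^1 (1+Mtr)^{-1}\,\d t = \tfrac{\ln(1+Mr)}{Mr}$ and invokes $\ln(1+s)\ge \tfrac{2s}{2+s}$, whereas you obtain the same constant $(1+\tfrac{Mr}{2})^{-1}$ in one step via Jensen's inequality.
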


\section{Newton for strongly self-concordant functions}
 In this section we study the convergence of the classical Newton method when applied to strongly self-concordant functions. We begin by showing its global convergence. The proof is based on general self-concordant functions in \eqref{def: self concordant func}. Later, we present its local quadratic rates strongly self-concordant functions.

Similar to self-concordant functions, Newton's method convergence will be measured in terms of the Newton decrement. We start by estimating the convergence of the Damped Newton method. Consider the updates
\begin{equation} \label{eq: damped newton}
    x_{k+1} = x_k - \frac{1}{1 + \alpha_k} \nabla^2 f (x_k)^{-1} \nabla f (x_k),
\end{equation}
where $\alpha_k \geq 0$ and $k \in \N$. Since strong self-concordance is a subclass of self-concordance, we can immediately guarantee the descent and global property of the Damped Newton method using the same damping parameter, see \cref{thm: newton first phase self concor}. 
\begin{corollary} \label{cor: damped newton convergence}
    Suppose the sequence $(x_k)_{k \in \N}$ is generated by \eqref{eq: damped newton} with $\alpha_k \coloneq M\lambda(x_k)$. Then,
    \begin{equation*}
         f(x_{k}) - f(x_{k+1}) \geq \gamma \coloneq \frac{1}{4M^2} \omega(2M \lambda(x_k)),
    \end{equation*}
    where $\omega$ is defined in \eqref{eq: omega definition}.
\end{corollary}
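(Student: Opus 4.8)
The plan is to deduce the claim directly from \cref{thm: newton first phase self concor}, exploiting that strong self-concordance is a subclass of self-concordance, and then to reconcile the two constants by a one-line scalar inequality for $\omega$. The only subtlety is to match the damping factor $\frac{1}{1+\alpha_k}$ of \eqref{eq: damped newton}, with $\alpha_k = M\lambda(x_k)$, to the factor $\frac{1}{1+M_\text{sc}\lambda(x_k)}$ that \cref{thm: newton first phase self concor} prescribes.

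First I would record the subclass relationship: a strongly self-concordant function with constant $M$ is self-concordant with $M_\text{sc} = M/2$. Since the right-hand side of \eqref{def: self concordant func} is nondecreasing in $M_\text{sc}$, the bound stays valid for any larger constant; in particular $f$ is self-concordant with $M_\text{sc} = M$. With this (deliberately conservative) choice the damping factor $\frac{1}{1+M_\text{sc}\lambda(x_k)}$ in \cref{thm: newton first phase self concor} equals $\frac{1}{1+M\lambda(x_k)}$, which is exactly the factor used in \eqref{eq: damped newton}. Applying \cref{thm: newton first phase self concor} with $M_\text{sc} = M$ then gives
\begin{equation*}
f(x_k) - f(x_{k+1}) \geq \frac{1}{M^2}\,\omega\!\left(M\lambda(x_k)\right).
\end{equation*}

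It remains to pass from this to $\gamma = \frac{1}{4M^2}\omega(2M\lambda(x_k))$, i.e.\ to show $4\omega(s) \geq \omega(2s)$ for $s \coloneq M\lambda(x_k) \geq 0$. I would set $g(s) \coloneq 4\omega(s) - \omega(2s)$ and differentiate: using $\omega'(t) = \frac{t}{1+t}$ from \eqref{eq: omega definition}, one finds $g'(s) = \frac{4s}{1+s} - \frac{4s}{1+2s} = \frac{4s^2}{(1+s)(1+2s)} \geq 0$ for $s \geq 0$. Hence $g$ is nondecreasing on $[0,\infty)$ and $g(0) = 0$, so $g(s) \geq 0$ throughout. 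Chaining the two inequalities yields $f(x_k) - f(x_{k+1}) \geq \frac{1}{M^2}\omega(M\lambda(x_k)) \geq \frac{1}{4M^2}\omega(2M\lambda(x_k)) = \gamma$.

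The main (indeed essentially the only) obstacle is this last reduction, since everything preceding it is a citation; the content is the elementary estimate $4\omega(s)\geq\omega(2s)$. A more refined alternative would retain $M_\text{sc}=M/2$ and substitute the non-optimal step $\frac{1}{1+M\lambda(x_k)}$ directly into the self-concordant upper bound for $f$ along the Newton direction; this gives a sharper but less transparent descent quantity that one would again bound below by $\gamma$. I expect the first route to be preferable, as it collapses the whole proof to \cref{thm: newton first phase self concor} together with the single scalar inequality above.
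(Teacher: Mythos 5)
Your proof is correct, and it is in fact more careful than the paper's own argument. The paper proves this corollary in one line: it invokes \cref{thm: newton first phase self concor} together with the fact that strong self-concordance with constant $M$ implies self-concordance with $M_\text{sc} = M/2$. Read literally, that does not match the corollary's statement: with $M_\text{sc} = M/2$, the theorem prescribes the damping factor $\frac{1}{1+\frac{M}{2}\lambda(x_k)}$ rather than the corollary's $\frac{1}{1+M\lambda(x_k)}$, and yields the decrease $\frac{4}{M^2}\omega\bigl(\tfrac{M}{2}\lambda(x_k)\bigr)$ rather than $\frac{1}{4M^2}\omega(2M\lambda(x_k))$, so "follows immediately" hides a genuine mismatch of constants. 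Your two extra steps are exactly what is needed to repair this: (i) enlarging the self-concordance constant to $M_\text{sc}=M$ — legitimate, since the bound \eqref{def: self concordant func} is preserved when $M_\text{sc}$ increases — makes the theorem's damping coincide with $\alpha_k = M\lambda(x_k)$ and gives $f(x_k)-f(x_{k+1}) \geq \frac{1}{M^2}\omega(M\lambda(x_k))$; and (ii) the scalar inequality $4\omega(s)\geq\omega(2s)$, which your computation $g'(s) = \frac{4s}{1+s}-\frac{4s}{1+2s} = \frac{4s^2}{(1+s)(1+2s)}\geq 0$ with $g(0)=0$ establishes correctly, converts that into the stated $\gamma$. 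A side benefit of your route is that it shows the corollary's constant is not sharp: the same iteration already guarantees the stronger decrease $\frac{1}{M^2}\omega(M\lambda(x_k))$, of which $\gamma = \frac{1}{4M^2}\omega(2M\lambda(x_k))$ is a deliberate weakening.
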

 \begin{proof}
     The result follows immediately from inequality \eqref{ineq: newton first phase self concor} and the fact that strong self-concordant functions are self-concordant with parameter $M_\text{sc} = M/2$.
 \end{proof}

The result in \cref{cor: damped newton convergence} holds for any \(x_0 \in \mathbb{R}^n\) and guarantees that the objective function decreases by at least some constant \(\gamma > 0\) at each iteration. The following auxiliary lemma will be useful in our analysis.
\begin{lemma} \label{lemma: quadratic form bound on matrices}
Let $B, G \in \R^{n \times n}$ be symmetric positive definite and $a, b \in \R$ such that
\begin{equation} \label{ineq: ass in lemma appendix}
     a B \preceq G - B \preceq b B.
\end{equation}
Then,
\begin{equation} \label{ineq: quadratic form bound on matrices}
    \left( G - B\right) B^{-1} \left( G - B\right) \preceq c^2 B
\end{equation}
where $c = \max\{|a|, |b| \}$.
\end{lemma}
\begin{proof}
Let $C \coloneq B^{-1/2} \left( G - B\right) B^{-1/2}$. Then by \eqref{ineq: ass in lemma appendix}
    \begin{equation} \label{ineq: C ineq}
        a I \preceq C \preceq b I.
    \end{equation}
    It holds $C^T = C$ and thus $C$ is symmetric positive definite, i.e., $C$ has real eigenvalues. Applying the eigenvalue decomposition on $C$, then by \eqref{ineq: C ineq} we have that $a \leq \lambda_i(C) \leq b$, for all $i = 1,2, \ldots, n$,  where $\lambda_i(C)$ is the $i^\text{th}$ eigenvalue of $C$. Thus, $\lambda_i(C) \leq c$, which implies $\lambda_i(C)^{2} \leq c^2$, for all  $i = 1,2, \ldots, n$. This yields
    \begin{align*}
        C^2 \preceq c^2 I \implies  B^{1/2}C^2 B^{1/2}\preceq c^2 B,  
    \end{align*}
    which is \eqref{ineq: quadratic form bound on matrices}, and thus the proof is finished.
\end{proof}    

We now turn our focus to the local convergence properties of the scheme in \eqref{eq: damped newton}, which is the main contribution of this paper. In what follows, we present our core lemma, which quantifies the progress achieved in a single step of the scheme \eqref{eq: damped newton}. We will need the following definitions.

\begin{equation} \label{eq: def r_k, d_k}
\begin{split}
    r_k \coloneq \|x_{k+1} - x_{k} \|_{x_{k}}, \quad & d_k \coloneq - \frac{1}{1+ \alpha_k} \nabla^2 f(x_{k})^{-1} \nabla f(x_{k}), \\ 
    G_k \coloneq \int_0^1 \nabla^2 & f (x_k  + t (x_{k+1} - x_k)) \d t.
\end{split}
\end{equation}
where the local norm $\|x_{k+1} - x_{k} \|_{x_{k}}$ is defined in \eqref{eq: local norms definition}.

\begin{lemma} \label{lemma: newton's progress lambda}
Suppose the sequence $(x_k)_{k \in \N}$ is generated by the Newton method \eqref{eq: damped newton}. Furthermore, denote 
    \begin{equation} \label{eq: def c_k}
        c_k \coloneq \max \left\lbrace \frac{M r_k }{2 + M r_k} + \alpha_k, \left| \frac{ Mr_k}{2} - \alpha_k \right| \right\rbrace.
    \end{equation}
Then, 
\begin{equation}
    \lambda(x_{k+1}) \leq c_k \sqrt{1 + M r_k} \lambda(x_k) \label{ineq: newton's progress lambda}
\end{equation}
for all $k \in \N$.
\end{lemma}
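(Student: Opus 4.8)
The plan is to express $\nabla f(x_{k+1})$ in terms of the step $d_k$ through the averaged Hessian $G_k$, transfer everything to the local geometry at $x_k$, and then apply \cref{lemma: quadratic form bound on matrices}. First I would record two preliminary identities. From the definition of $d_k$ in \eqref{eq: def r_k, d_k} we have $\nabla f(x_k) = -(1+\alpha_k)\nabla^2 f(x_k)\,d_k$, and therefore
\[
    r_k = \|d_k\|_{x_k} = \frac{\lambda(x_k)}{1+\alpha_k} \leq \lambda(x_k),
\]
the last inequality because $\alpha_k \geq 0$. Applying the fundamental theorem of calculus to $\nabla f$ along the segment $[x_k, x_{k+1}]$ gives $\nabla f(x_{k+1}) = \nabla f(x_k) + G_k d_k$, and substituting the first identity yields the key representation
\[
    \nabla f(x_{k+1}) = \left( G_k - (1+\alpha_k)\nabla^2 f(x_k) \right) d_k .
\]

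Next I would move from the dual norm at $x_{k+1}$ to the one at $x_k$. Inverting the Hessian comparison \eqref{ineq: hessian comparison x_k x_k+1} (the order reverses under inversion of positive definite matrices) gives $\nabla^2 f(x_{k+1})^{-1} \preceq (1+Mr_k)\,\nabla^2 f(x_k)^{-1}$, so that, writing $B_k \coloneq \nabla^2 f(x_k)$ and $S_k \coloneq G_k - (1+\alpha_k)B_k$ and using that $S_k$ is symmetric,
\[
    \lambda(x_{k+1})^2 \leq (1+Mr_k)\, d_k^T S_k B_k^{-1} S_k\, d_k .
\]

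The core step is to bound the quadratic form $S_k B_k^{-1} S_k$ via \cref{lemma: quadratic form bound on matrices}, applied with $B = B_k$ and $G = G_k - \alpha_k B_k$, so that $G - B = S_k$. The required two-sided bound $a B_k \preceq S_k \preceq b B_k$ is obtained by subtracting $(1+\alpha_k)B_k$ from the averaged-Hessian estimate \eqref{ineq: average hessian for x_k}, which gives
\[
    a = -\frac{Mr_k}{2+Mr_k} - \alpha_k, \qquad b = \frac{Mr_k}{2} - \alpha_k .
\]
Since $Mr_k, \alpha_k \geq 0$ we have $|a| = \frac{Mr_k}{2+Mr_k} + \alpha_k$ and $|b| = \left| \frac{Mr_k}{2} - \alpha_k \right|$, so $\max\{|a|,|b|\}$ is exactly the constant $c_k$ of \eqref{eq: def c_k}. \cref{lemma: quadratic form bound on matrices} then yields $S_k B_k^{-1} S_k \preceq c_k^2 B_k$, hence $d_k^T S_k B_k^{-1} S_k d_k \leq c_k^2\, d_k^T B_k d_k = c_k^2 r_k^2$. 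Combining the three displays gives $\lambda(x_{k+1}) \leq c_k \sqrt{1+Mr_k}\, r_k$, and \eqref{ineq: newton's progress lambda} follows from $r_k \leq \lambda(x_k)$.

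I expect the delicate point to be the core step: choosing the shift $G = G_k - \alpha_k B_k$ so that the auxiliary lemma applies cleanly, and carefully tracking signs so that $|a|$ collapses to the sum $\frac{Mr_k}{2+Mr_k}+\alpha_k$ rather than a difference, thereby matching $c_k$ exactly. Everything else is bookkeeping with orderings of positive definite matrices, together with the harmless final relaxation $r_k \leq \lambda(x_k)$ that converts the bound into one stated purely in terms of the Newton decrement.
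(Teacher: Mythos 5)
Your proof is correct and takes essentially the same route as the paper's: the same Taylor-identity factorization $\nabla f(x_{k+1}) = \left(G_k - (1+\alpha_k)\nabla^2 f(x_k)\right)d_k$, the same passage to the local norm at $x_k$ via \eqref{ineq: hessian comparison x_k x_k+1}, and the same invocation of \cref{lemma: quadratic form bound on matrices}, the only difference being that you realize $S_k$ as $G - B$ by shifting ($G = G_k - \alpha_k \nabla^2 f(x_k)$, $B = \nabla^2 f(x_k)$) whereas the paper scales ($G = G_k$, $B = (1+\alpha_k)\nabla^2 f(x_k)$, with constant $c_k/(1+\alpha_k)$). One pedantic remark: your shifted matrix $G_k - \alpha_k \nabla^2 f(x_k)$ need not be positive definite when $\alpha_k$ is large, so strictly speaking it falls outside the stated hypotheses of \cref{lemma: quadratic form bound on matrices}; this is harmless since that lemma's proof uses only $B \succ 0$ and symmetry of $G - B$ (the paper's scaling choice sidesteps the issue entirely), and your explicit final step $r_k \le \lambda(x_k)$ is, if anything, more careful than the paper's.
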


\begin{proof}
Let us fix $k \in \N$ and denote $ \nabla f_{k} \coloneq\nabla f(x_{k})$, $\nabla^2 f_{k} \coloneq\nabla^2 f(x_{k})$ and $\lambda_{k} \coloneq\lambda(x_{k})$.
First, note from Taylor's identity, $\nabla f_{k+1} = \nabla f_{k} + G_k (x_{k+1} - x_k)$, that
\begin{equation} \label{eq: taylors theorem}
    \nabla f_{k+1} \overset{\eqref{eq: def r_k, d_k}}{=} \nabla f_{k} + G_k d_k \overset{\eqref{eq: def r_k, d_k}}{=} -(1 + \alpha_k) \nabla^2 f_{k} d_k + G_k d_k = \left(G_k - (1 + \alpha_k) \nabla^2 f_{k}\right) d_k.
\end{equation}
Now we have, 
    \begin{equation} \label{ineq: lambda initial}
    \begin{split}
    \lambda_{k+1} & \overset{\eqref{eq: newton decrement}}{=} \left[\nabla f_{k+1}^T \nabla^2 f_{k+1}^{-1} \nabla f_{k+1}\right]^{1/2} \\
    & \overset{\eqref{eq: taylors theorem}}{=} \left[d_k^T \left(G_k - (1 + \alpha_k) \nabla^2 f_{k}\right) \nabla^2 f_{k+1}^{-1} \left(G_k - (1 + \alpha_k) \nabla^2 f_{k}\right) d_k \right]^{1/2} \\
    & \overset{\eqref{ineq: hessian comparison x_k x_k+1}}{\leq} \sqrt{1 + M r_k} 
\left[d_k^T \left(G_k - (1 + \alpha_k) \nabla^2 f_{k}\right) \nabla^2 f_{k}^{-1} \left(G_k - (1 + \alpha_k) \nabla^2 f_{k}\right) d_k \right]^{1/2}
    \end{split}
    \end{equation}
    Further, using \eqref{ineq: average hessian for x_k} and subtracting $(1 + \alpha_k)\nabla^2 f_{k}$ we obtain 
    \begin{align}
       - & \left( \frac{M r_k}{2 + M r_k} + \alpha_k \right) \nabla^2 f_{k} \preceq G_k - (1 + \alpha_k)\nabla^2 f_{k} \preceq \left( \frac{Mr_k}{2} - \alpha_k \right)\nabla^2 f_{k} \notag \\
       & \left( \frac{M r_k}{2 + M r_k} + \alpha_k \right) \frac{1 + a_k}{1 + a_k}\nabla^2 f_{k} \preceq G_k - (1 + \alpha_k)\nabla^2 f_{k} \preceq \left( \frac{Mr_k}{2} - \alpha_k \right) \frac{1 + a_k}{1 + a_k}\nabla^2 f_{k} \nabla^2 f_{k} \label{ineq: bound with c_k}
    \end{align}
    From \eqref{ineq: bound with c_k} and \cref{lemma: quadratic form bound on matrices} for $G\coloneq G_k$, $B \coloneq (1 + \alpha_k)\nabla^2 f_{k}$ and $\tilde{c}_k = c_k / (1+a_k)$ we obtain
    \begin{align}  \label{ineq: quadratic in theorem}
        & (G_k - (1 + \alpha_k)\nabla^2 f_{k})  \frac{\nabla^2 f_k^{-1}}{1+\alpha_k} (G_k - (1 + \alpha_k)\nabla^2 f_{k}) \preceq \tilde{c}_k^2 (1 + \alpha_k)\nabla^2 f_k \notag \\
        \iff & (G_k - (1 + \alpha_k)\nabla^2 f_{k})  \nabla^2 f_k^{-1} (G_k - (1 + \alpha_k)\nabla^2 f_{k}) \preceq c_k^2 \nabla^2 f_k
    \end{align}
    Plugging \eqref{ineq: quadratic in theorem} into \eqref{ineq: lambda initial} we take
    \begin{align*}
        \lambda_{k+1} & \leq c_k \sqrt{1 + M r_k} \left[d_k^T \nabla^2 f_k d_k \right]^{1/2} \\
        & = c_k \sqrt{1 + M r_k} \lambda_k,
    \end{align*}
    as required.
\end{proof}

In what follows we estimate the local convergence of both pure and damped Newton methods using the result of \cref{lemma: newton's progress lambda}.

\begin{theorem} \label{thm: newton's rate}
Suppose the sequence $(x_k)_{k \in \N}$ is generated by the Newton's method \eqref{eq: damped newton}. Then for all $k \in \N$ the following relationships are satisfied:
\begin{enumerate}
    \item If $\alpha_k = 0$, then 
    \begin{equation}
            \lambda(x_{k+1}) \leq  \frac{\sqrt{1 + M \lambda(x_{k})}}{2} M \lambda(x_{k})^2, \label{ineq: pure newtons rate}
    \end{equation}
    \item If $\alpha_k = M \lambda(x_k)$, then 
    \begin{equation} \label{ineq: local rate damped newton}
    \begin{split}
        \lambda(x_{k+1}) & \leq \sqrt{\frac{1 + 2 M \lambda(x_{k})}{1 + M \lambda(x_{k})}} \frac{3 + 3 M \lambda(x_{k})}{2 + 3M \lambda(x_{k})} M \lambda(x_{k})^2  \leq \frac{3 \sqrt{2}}{2} M \lambda(x_{k})^2.
        \end{split}
    \end{equation}
\end{enumerate}
\end{theorem}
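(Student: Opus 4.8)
The plan is to specialize the single-step estimate of \cref{lemma: newton's progress lambda}, $\lambda(x_{k+1}) \le c_k\sqrt{1+Mr_k}\,\lambda(x_k)$, to the two prescribed values of $\alpha_k$. Writing $\lambda_k \coloneq \lambda(x_k)$, the first thing I would do is express $r_k$ explicitly. Since the step taken by \eqref{eq: damped newton} is $d_k = -\tfrac{1}{1+\alpha_k}\nabla^2 f_k^{-1}\nabla f_k$, evaluating the local norm gives $r_k^2 = d_k^T \nabla^2 f_k\, d_k = \tfrac{1}{(1+\alpha_k)^2}\nabla f_k^T\nabla^2 f_k^{-1}\nabla f_k = \tfrac{\lambda_k^2}{(1+\alpha_k)^2}$, so that $r_k = \lambda_k/(1+\alpha_k)$. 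With this identity, every quantity appearing in \eqref{eq: def c_k} and in $\sqrt{1+Mr_k}$ becomes an elementary function of $t \coloneq M\lambda_k \ge 0$, and the proof reduces to algebra.

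For part~(1), substituting $\alpha_k=0$ yields $r_k=\lambda_k$, so the two candidates in $c_k$ are $\tfrac{M\lambda_k}{2+M\lambda_k}$ and $\tfrac{M\lambda_k}{2}$; since $2+M\lambda_k\ge 2$ the latter is the larger, giving $c_k=\tfrac{M\lambda_k}{2}$. Plugging $c_k$ and $r_k=\lambda_k$ into the lemma produces \eqref{ineq: pure newtons rate} directly.

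For part~(2), I would put $\alpha_k=M\lambda_k$, so that $r_k=\lambda_k/(1+M\lambda_k)$ and $Mr_k=t/(1+t)$. Evaluating the two candidates in \eqref{eq: def c_k} gives $\tfrac{Mr_k}{2+Mr_k}+\alpha_k=\tfrac{3t(1+t)}{2+3t}$ and $\bigl|\tfrac{Mr_k}{2}-\alpha_k\bigr|=\tfrac{t(1+2t)}{2(1+t)}$, while $\sqrt{1+Mr_k}=\sqrt{(1+2t)/(1+t)}$. The crucial step is to identify which candidate defines $c_k$: after clearing positive denominators, the comparison reduces to $6(1+t)^2\ge(1+2t)(2+3t)$, equivalently $4+5t\ge 0$, which holds for all $t\ge 0$; hence $c_k=\tfrac{3t(1+t)}{2+3t}=M\lambda_k\tfrac{3+3M\lambda_k}{2+3M\lambda_k}$. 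Substituting $c_k$ and $\sqrt{1+Mr_k}$ into the lemma recovers the first inequality in \eqref{ineq: local rate damped newton}.

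The remaining task, and the only place requiring a small amount of care, is the clean bound $\tfrac{3\sqrt2}{2}$. I would bound the two $t$-dependent factors separately rather than optimize their product: $\sqrt{(1+2t)/(1+t)}=\sqrt{2-\tfrac{1}{1+t}}<\sqrt2$, and $\tfrac{3(1+t)}{2+3t}$ is decreasing in $t$ (its derivative has numerator $-3$) so it is at most its value $\tfrac32$ at $t=0$. Multiplying these two bounds gives $\sqrt2\cdot\tfrac32=\tfrac{3\sqrt2}{2}$, completing \eqref{ineq: local rate damped newton}. The main obstacle is thus not analytic difficulty but bookkeeping: correctly simplifying the two arguments of the $\max$ in \eqref{eq: def c_k} and verifying which one dominates over the whole range $t\ge0$.
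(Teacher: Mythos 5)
Your proposal is correct and follows essentially the same route as the paper: specialize Lemma~\ref{lemma: newton's progress lambda} with $r_k=\lambda_k/(1+\alpha_k)$, compute the two candidates in the $\max$ defining $c_k$, and bound the two $t$-dependent factors separately to get $\tfrac{3\sqrt{2}}{2}$. In fact you are slightly more careful than the paper, which asserts without justification that the first candidate attains the $\max$ in part~(2), whereas you verify it via the reduction to $4+5t\ge 0$.
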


\begin{proof}
    Denote $\lambda_{k} \coloneq \lambda(x_{k})$.  By the definition of $r_k$ in \eqref{eq: def r_k, d_k}, note that $r_k = \frac{1}{1+\alpha_k} \lambda_k$. Then we have that:
    \begin{enumerate}
        \item If $\alpha_k = 0$, we get that $r_k = \lambda_k$ and,
        \begin{equation*}
            c_k \overset{\eqref{eq: def c_k}}{=} \max \left\lbrace \frac{M \lambda_k }{2 + M \lambda_k},  \frac{M\lambda_k}{2} \right\rbrace = \frac{M\lambda_k}{2}.
        \end{equation*}
        Putting this all into \eqref{ineq: newton's progress lambda}, inequality \eqref{ineq: pure newtons rate} follows. 
        \item If $\alpha_k = M \lambda_k$, we get that $r_k = \frac{1}{1+M \lambda_k} \lambda_k$ and
        \begin{align*}
            c_k  \overset{\eqref{eq: def c_k}}{=} & \max \left\lbrace \frac{M \lambda_k }{2 + 3M \lambda_k} + M \lambda_k,  \left|\frac{M \lambda_k }{2 + 2M \lambda_k} - M \lambda_k \right| \right\rbrace \\
               = & \max \left\lbrace \frac{3 M^2 \lambda_k^2 +3 M \lambda_k}{2 + 3 M \lambda_k}, \frac{2 M^2 \lambda_k^2 + M \lambda_k}{2 + 2 M \lambda_k} \right\rbrace \\
               = & \frac{3 M^2 \lambda_k^2 +3 M \lambda_k}{2 + 3 M \lambda_k}.
        \end{align*}
            Replacing the above into \eqref{ineq: newton's progress lambda}, and using the fact that $\alpha_k = M \lambda_k$ and $r_k = \frac{1}{1+M \lambda_k} \lambda_k$ we have that 
            \begin{equation} \label{ineq: first local rate damped newton}
                        \lambda(x_{k+1}) \leq \sqrt{\frac{1 + 2 M \lambda_k}{1 + M \lambda_k}} \frac{3+ 3 M \lambda_k }{2 + 3 M \lambda_k} M \lambda_k^2,
            \end{equation}
            and thus the left hand-side in inequality \eqref{ineq: local rate damped newton} is proved. Further, 
            \begin{equation} \label{ineq: first fraction bound}
                \sqrt{\frac{1 + 2 M \lambda_k}{1 + M \lambda_k}} \leq \sqrt{2} \quad \text{and} \quad \frac{3 + 3M \lambda_k}{2 + 3M \lambda_k} \leq \frac{3}{2}.
            \end{equation}
            Combining \eqref{ineq: first local rate damped newton} and \eqref{ineq: first fraction bound}, the right hand-side of inequality \eqref{ineq: local rate damped newton} follows. The proof is completed.
    \end{enumerate}
\end{proof}

Theorem \ref{thm: newton's rate} shows that, between the two local estimates, the one of the pure Newton method \eqref{ineq: pure newtons rate} is more attractive when $ M \lambda(x_{k})$ is small. Theorem \ref{thm: newton's rate} also provides us with the following description of the pure Newton's local convergence. If $\lambda(x_k) \leq \frac{131}{100M}$, then the method will converge quadratically. In particular, 
\begin{equation*}
    \lambda(x_{k+1}) \leq  \frac{\sqrt{1 + M \lambda(x_{k})}}{2} M \lambda(x_{k})^2 \leq 
    \frac{\sqrt{1 + \frac{131}{100}}}{2} \frac{131}{100} \lambda(x_{k}) < \lambda(x_{k}).
\end{equation*}
On the other hand, if $\lambda(x_k) > \frac{131}{100M}$, by \cref{cor: damped newton convergence} and the monotonicity of $\omega(\cdot)$, it holds that
    \begin{equation*}
        f(x_{k+1}) \leq f(x_{k}) - \frac{1}{4M^2} \omega\left(\frac{262}{100}\right).
    \end{equation*}
Hence, the total number of steps before the method enters its quadratic phase are at most
\begin{equation*}
    K = \frac{4M^2}{\omega\left(\frac{262}{100}\right)} \left( f(x_0) - f(x^*)\right)
\end{equation*}
where $x_0 \in \R^n$ is any initialization point in the first phase.

Let us now compare the convergence rates of the Newton method when minimizing strongly self-concordant and self-concordant functions. Recall that a strongly self-concordant function is self-concordant with parameter $M_\text{sc}=M/2$. Then the local quadratic rate of the Newton's method for self-concordant functions with parameter $M_\text{sc}$ is as follows: if $\lambda(x_{k})  \leq \frac{1}{M_\text{sc}} =\frac{2}{M}$, then
\begin{equation} \label{ineq: newton's rate self concordant}
    \lambda(x_{k+1}) \overset{\eqref{ineq: newton quadratic phase self concor with Msc}}{\leq} \frac{\frac{M}{2}\lambda(x_{k})^2}{\left(1-\frac{M}{2}\lambda(x_{k})\right)^2}.
\end{equation}
To say something about the rates \eqref{ineq: newton's rate self concordant} and \eqref{ineq: pure newtons rate}, we will look at a more convenient estimate than \eqref{ineq: pure newtons rate}. In particular, in the proof of \cref{lemma: newton's progress lambda}, one could replace \eqref{ineq: lambda initial} with 
\begin{align*}
    \lambda(x_{k+1}) & \overset{\eqref{eq: newton decrement}}{=} \left[\nabla f_{k+1}^T \nabla^2 f_{k+1}^{-1} \nabla f_{k+1}\right]^{1/2} \\
    & \overset{\eqref{eq: taylors theorem}}{=} \left[d_k^T \left(G_k - (1 + \alpha_k) \nabla^2 f_{k}\right) \nabla^2 f_{k+1}^{-1} \left(G_k - (1 + \alpha_k) \nabla^2 f_{k}\right) d_k \right]^{1/2} \\
    & \overset{\eqref{ineq: average hessian for x_k+1}}{\leq} \sqrt{1 + \frac{M r_k}{2}}
\left[d_k^T \left(G_k - (1 + \alpha_k) \nabla^2 f_{k} \right) G_k^{-1} \left(G_k - (1 + \alpha_k) \nabla^2 f_{k}\right) d_k \right]^{1/2} \\
& \overset{\eqref{ineq: average hessian for x_k}}{\leq} \left(1 + \frac{M r_k}{2}\right)
\left[d_k^T \left(G_k - (1 + \alpha_k) \nabla^2 f_{k}\right) \nabla^2 f_{k}^{-1} \left(G_k - (1 + \alpha_k) \nabla^2 f_{k}\right) d_k \right]^{1/2}\\
& \leq \left(1 + \frac{M r_k}{2}\right) c_k \lambda(x_{k}),
\end{align*}
where the last inequality follows immediately from  \cref{lemma: newton's progress lambda}. Then from \cref{thm: newton's rate} with $\alpha_k=0$, we have 
\begin{equation}
        \lambda(x_{k+1}) \leq \left(1 + \frac{M\lambda(x_k)}{2}\right)\frac{M \lambda(x_k)^2}{2}. \label{ineq: newton rate no 2}
\end{equation}
Inequality \eqref{ineq: newton rate no 2} also shows a local quadratic convergence rate, however it is more conservative than \eqref{ineq: pure newtons rate}. Nevertheless, we can use it to tell something about the behavior the pure Newton method when applied to strong self-concordant functions. In view of the estimates \eqref{ineq: newton rate no 2} and \eqref{ineq: newton's rate self concordant}, and since 
\begin{equation*}
    \left(1 + \frac{M\lambda(x_k)}{2} \right)    \left(1 - \frac{M\lambda(x_k)}{2} \right)^2 = \left(1 - \left(\frac{M\lambda(x_k)}{2}\right)^2 \right) \left(1 - \frac{M\lambda(x_k)}{2} \right) \leq 1,
\end{equation*}
we conclude that the local quadratic rate in \eqref{ineq: newton rate no 2} is faster than the one in \eqref{ineq: newton's rate self concordant}, and thus the pure Newton method is expected to converge faster for strongly self-concordant functions. In summary we obtain 
\begin{equation*}
\begin{split}
    \lambda(x_{k+1}) & \leq  \frac{M\sqrt{1 + M \lambda(x_{k})}}{2} \lambda(x_{k})^2 \leq \left(1 + \frac{M\lambda(x_k)}{2}\right) \frac{M \lambda(x_k)}{2}^2 \\ 
    & \leq \frac{1}{\left(1-\frac{M}{2}\lambda(x_{k})\right)^2} \frac{M\lambda(x_k)^2}{2}.
    \end{split}
\end{equation*}

Regarding the region of the local rate, for self-concordant functions, the Newton method enters its quadratic rate if $\lambda \leq \frac{3 - \sqrt{5}}{M}$, or approximately $\lambda \leq \frac{7}{10M}$, whereas, for strongly self-concordant function the local quadratic rate is activated if $\lambda(x_k) \leq \frac{6}{5M}$. Therefore, minimizing strongly self-concordant functions we obtain an extended region of the local quadratic convergence rate compared to self-concordant functions. 

\section{Conclusions}
We analyze the behavior of Newton’s method when applied to the subclass of strongly self-concordant functions and establish that it exhibits improved convergence properties compared to the general self-concordant setting. In particular, we show that Newton’s method converges faster under this regime and that it also enjoys an expanded region of local quadratic convergence.  These results close an existing gap in the theoretical understanding of the convergence of Newton’s method under the stronger curvature assumptions imposed by the strongly self-concordant setting.


\section*{Acknowledgment}The authors would like to thank Dr. Michael Tsingelis for reading a preliminary version of this paper and providing valuable comments that helped improve its clarity and presentation.

\bibliography{sn-bibliography}


\begin{thebibliography}{13}
\ifx \bisbn   \undefined \def \bisbn  #1{ISBN #1}\fi
\ifx \binits  \undefined \def \binits#1{#1}\fi
\ifx \bauthor  \undefined \def \bauthor#1{#1}\fi
\ifx \batitle  \undefined \def \batitle#1{#1}\fi
\ifx \bjtitle  \undefined \def \bjtitle#1{#1}\fi
\ifx \bvolume  \undefined \def \bvolume#1{\textbf{#1}}\fi
\ifx \byear  \undefined \def \byear#1{#1}\fi
\ifx \bissue  \undefined \def \bissue#1{#1}\fi
\ifx \bfpage  \undefined \def \bfpage#1{#1}\fi
\ifx \blpage  \undefined \def \blpage #1{#1}\fi
\ifx \burl  \undefined \def \burl#1{\textsf{#1}}\fi
\ifx \doiurl  \undefined \def \doiurl#1{\url{https://doi.org/#1}}\fi
\ifx \betal  \undefined \def \betal{\textit{et al.}}\fi
\ifx \binstitute  \undefined \def \binstitute#1{#1}\fi
\ifx \binstitutionaled  \undefined \def \binstitutionaled#1{#1}\fi
\ifx \bctitle  \undefined \def \bctitle#1{#1}\fi
\ifx \beditor  \undefined \def \beditor#1{#1}\fi
\ifx \bpublisher  \undefined \def \bpublisher#1{#1}\fi
\ifx \bbtitle  \undefined \def \bbtitle#1{#1}\fi
\ifx \bedition  \undefined \def \bedition#1{#1}\fi
\ifx \bseriesno  \undefined \def \bseriesno#1{#1}\fi
\ifx \blocation  \undefined \def \blocation#1{#1}\fi
\ifx \bsertitle  \undefined \def \bsertitle#1{#1}\fi
\ifx \bsnm \undefined \def \bsnm#1{#1}\fi
\ifx \bsuffix \undefined \def \bsuffix#1{#1}\fi
\ifx \bparticle \undefined \def \bparticle#1{#1}\fi
\ifx \barticle \undefined \def \barticle#1{#1}\fi
\bibcommenthead
\ifx \bconfdate \undefined \def \bconfdate #1{#1}\fi
\ifx \botherref \undefined \def \botherref #1{#1}\fi
\ifx \url \undefined \def \url#1{\textsf{#1}}\fi
\ifx \bchapter \undefined \def \bchapter#1{#1}\fi
\ifx \bbook \undefined \def \bbook#1{#1}\fi
\ifx \bcomment \undefined \def \bcomment#1{#1}\fi
\ifx \oauthor \undefined \def \oauthor#1{#1}\fi
\ifx \citeauthoryear \undefined \def \citeauthoryear#1{#1}\fi
\ifx \endbibitem  \undefined \def \endbibitem {}\fi
\ifx \bconflocation  \undefined \def \bconflocation#1{#1}\fi
\ifx \arxivurl  \undefined \def \arxivurl#1{\textsf{#1}}\fi
\csname PreBibitemsHook\endcsname

\bibitem[\protect\citeauthoryear{Nesterov}{2018}]{nesterov2018lectures}
\begin{bbook}
\bauthor{\bsnm{Nesterov}, \binits{Y.}}:
\bbtitle{Lectures on Convex Optimization},
\bedition{2}nd edn.
\bsertitle{Springer Optimization and Its Applications},
vol. \bseriesno{137},
p. \bfpage{589}.
\bpublisher{Springer},
\blocation{Cham, Switzerland}
(\byear{2018})
\end{bbook}
\endbibitem

\bibitem[\protect\citeauthoryear{Boyd and Vandenberghe}{2004}]{boyd2004convex}
\begin{bbook}
\bauthor{\bsnm{Boyd}, \binits{S.P.}},
\bauthor{\bsnm{Vandenberghe}, \binits{L.}}:
\bbtitle{Convex Optimization}.
\bpublisher{Cambridge University Press},
\blocation{Cambridge, United Kingdom}
(\byear{2004})
\end{bbook}
\endbibitem

\bibitem[\protect\citeauthoryear{Nesterov and Nemirovskii}{1994}]{nesterov1994interior}
\begin{bbook}
\bauthor{\bsnm{Nesterov}, \binits{Y.}},
\bauthor{\bsnm{Nemirovskii}, \binits{A.}}:
\bbtitle{Interior-Point Polynomial Algorithms in Convex Programming}.
\bpublisher{SIAM}, \blocation{???}
(\byear{1994}).
\doiurl{10.1137/1.9781611970791}
\end{bbook}
\endbibitem

\bibitem[\protect\citeauthoryear{Tran-Dinh et~al.}{2015}]{tran2015composite}
\begin{barticle}
\bauthor{\bsnm{Tran-Dinh}, \binits{Q.}},
\bauthor{\bsnm{Kyrillidis}, \binits{A.}},
\bauthor{\bsnm{Cevher}, \binits{V.}}:
\batitle{Composite self-concordant minimization}.
\bjtitle{Journal of Machine Learning Research}
\bvolume{16}(\bissue{1}),
\bfpage{371}--\blpage{416}
(\byear{2015})
\end{barticle}
\endbibitem

\bibitem[\protect\citeauthoryear{Li et~al.}{2017}]{li2017inexact}
\begin{barticle}
\bauthor{\bsnm{Li}, \binits{J.}},
\bauthor{\bsnm{Andersen}, \binits{M.S.}},
\bauthor{\bsnm{Vandenberghe}, \binits{L.}}:
\batitle{Inexact proximal newton methods for self-concordant functions}.
\bjtitle{Mathematical Methods of Operations Research}
\bvolume{85},
\bfpage{19}--\blpage{41}
(\byear{2017})
\doiurl{10.1007/s00186-016-0551-3}
\end{barticle}
\endbibitem

\bibitem[\protect\citeauthoryear{Pilanci and Wainwright}{2017}]{pilanci2017newton}
\begin{barticle}
\bauthor{\bsnm{Pilanci}, \binits{M.}},
\bauthor{\bsnm{Wainwright}, \binits{M.J.}}:
\batitle{Newton sketch: A near linear-time optimization algorithm with linear-quadratic convergence}.
\bjtitle{SIAM Journal on Optimization}
\bvolume{27}(\bissue{1}),
\bfpage{205}--\blpage{245}
(\byear{2017})
\doiurl{10.1137/15M101025X}
\end{barticle}
\endbibitem

\bibitem[\protect\citeauthoryear{Tsipinakis et~al.}{2023}]{tsipinakis2023lowrank}
\begin{botherref}
\oauthor{\bsnm{Tsipinakis}, \binits{N.}},
\oauthor{\bsnm{Tigkas}, \binits{P.}},
\oauthor{\bsnm{Parpas}, \binits{P.}}:
A multilevel low-rank newton method with super-linear convergence rate and its application to non-convex problems.
arXiv preprint arXiv:2305.08742
(2023)
\end{botherref}
\endbibitem

\bibitem[\protect\citeauthoryear{Tsipinakis and Parpas}{2024}]{tsipinakis2024multilevel}
\begin{barticle}
\bauthor{\bsnm{Tsipinakis}, \binits{N.}},
\bauthor{\bsnm{Parpas}, \binits{P.}}:
\batitle{A multilevel method for self-concordant minimization}.
\bjtitle{Journal of Optimization Theory and Applications}
\bvolume{203}(\bissue{3}),
\bfpage{2509}--\blpage{2559}
(\byear{2024})
\doiurl{10.1007/s10957-024-02378-7}
\end{barticle}
\endbibitem

\bibitem[\protect\citeauthoryear{Gao and Goldfarb}{2019}]{gao2019quasi}
\begin{barticle}
\bauthor{\bsnm{Gao}, \binits{W.}},
\bauthor{\bsnm{Goldfarb}, \binits{D.}}:
\batitle{Quasi-newton methods: Superlinear convergence without line searches for self-concordant functions}.
\bjtitle{Optimization Methods and Software}
\bvolume{34}(\bissue{1}),
\bfpage{194}--\blpage{217}
(\byear{2019})
\doiurl{10.1080/10556788.2018.1493086}
\end{barticle}
\endbibitem

\bibitem[\protect\citeauthoryear{Doikov}{2023}]{doikov2023minimizing}
\begin{botherref}
\oauthor{\bsnm{Doikov}, \binits{N.}}:
Minimizing quasi-self-concordant functions by gradient regularization of newton method.
arXiv preprint arXiv:2308.14742
(2023)
\end{botherref}
\endbibitem

\bibitem[\protect\citeauthoryear{Tran-Dinh et~al.}{2015}]{tran2015self-concordant-like}
\begin{bchapter}
\bauthor{\bsnm{Tran-Dinh}, \binits{Q.}},
\bauthor{\bsnm{Li}, \binits{Y.-H.}},
\bauthor{\bsnm{Cevher}, \binits{V.}}:
\bctitle{Composite convex minimization involving self-concordant-like cost functions}.
In: \bbtitle{Modelling, Computation and Optimization in Information Systems and Management Sciences: Proceedings of the 3rd International Conference on Modelling, Computation and Optimization in Information Systems and Management Sciences-MCO 2015-Part I},
pp. \bfpage{155}--\blpage{168}
(\byear{2015}).
\bcomment{Springer}
\end{bchapter}
\endbibitem

\bibitem[\protect\citeauthoryear{Sun and Tran-Dinh}{2019}]{sun2019generalized}
\begin{barticle}
\bauthor{\bsnm{Sun}, \binits{T.}},
\bauthor{\bsnm{Tran-Dinh}, \binits{Q.}}:
\batitle{Generalized self-concordant functions: A recipe for newton-type methods}.
\bjtitle{Mathematical Programming}
\bvolume{178}(\bissue{1}),
\bfpage{145}--\blpage{213}
(\byear{2019})
\doiurl{10.1007/s10107-018-1327-3}
\end{barticle}
\endbibitem

\bibitem[\protect\citeauthoryear{Rodomanov and Nesterov}{2021}]{rodomanov2021greedy}
\begin{barticle}
\bauthor{\bsnm{Rodomanov}, \binits{A.}},
\bauthor{\bsnm{Nesterov}, \binits{Y.}}:
\batitle{Greedy quasi-newton methods with explicit superlinear convergence}.
\bjtitle{SIAM Journal on Optimization}
\bvolume{31}(\bissue{1}),
\bfpage{785}--\blpage{811}
(\byear{2021})
\doiurl{10.1137/20M1320651}
\end{barticle}
\endbibitem

\end{thebibliography}

\end{document}